\documentclass[11pt]{amsart}

\usepackage[T1]{fontenc}
\usepackage[latin1]{inputenc}
\usepackage[english]{babel}
\usepackage{amsmath,amssymb,amsthm}
\usepackage{enumitem}
\renewcommand{\MR}[1]{}
\usepackage{mymacros}
\usepackage[margin=1.5in]{geometry}
\usepackage{hyperref}

\author{Rapha\"el Clou\^atre}
\address{Department of Mathematics, University of Manitoba, Winnipeg, Manitoba, Canada R3T 2N2}
\thanks{R.C. was partially supported by an NSERC Discovery grant.}
\email{raphael.clouatre@umanitoba.ca}
\author{Michael Hartz}
\address{Fakult\"at f\"ur Mathematik und Informatik, FernUniversit\"at in Hagen, 58084 Hagen, Germany}
\email{michael.hartz@fernuni-hagen.de}
\author{Dominik Schillo}
\address{Fachrichtung Mathematik, Universit\"at des Saarlandes, Postfach 151150,
66041 Saarb\"ucken, Germany}
\email{schillo@math.uni-sb.de}
\title[A Beurling theorem for spaces with a CNP factor]{A Beurling--Lax--Halmos theorem for spaces with a complete Nevanlinna--Pick factor}

\subjclass[2010]{Primary: 47A15 Secondary: 47B32, 46E22}
\keywords{Invariant subspaces, Beurling's theorem, Nevanlinna--Pick kernels}

\begin{document}

\begin{abstract}
  We provide a short argument to establish a Beurling--Lax--Halmos theorem for reproducing
  kernel Hilbert spaces whose kernel has a complete Nevanlinna--Pick factor.
  We also record factorization results for pairs of nested invariant subspaces.
\end{abstract}

\maketitle

\section{Introduction}

The Beurling--Lax--Halmos theorem is a fundamental result connecting
operator theory with function
theory on the unit disc.
It shows that a non-zero closed subspace $\cM$
of the vector-valued Hardy space $H^2 \otimes \cE$ is invariant under multiplication
by the coordinate function $z$ if and only if there exists a Hilbert space $\cF$ and an
isometric multiplier $\Phi$ from $H^2 \otimes \cF$ to $H^2 \otimes \cE$ such that
$\cM = \Phi \cdot (H^2 \otimes \cF)$; see \cite{Helson64} for a classical treatment.
This theorem was extended to complete Nevanlinna--Pick spaces by McCullough and Trent \cite{MT00};
in this setting, the multiplier $\Phi$ is in general only partially isometric, and moreover the dimension of $\cF$ may exceed the dimension of $\cE$. Similarly, Ball and Bolotnikov \cite{Ball2013,BB13a} considered invariant subspaces of weighted Bergman spaces $A_n$
on the unit disc and showed that they can be represented as ranges of partially isometric
multipliers from $H^2 \otimes \cF $ into $A_n \otimes \cE$.

In a different direction,
an intrinsic representation of invariant subspaces of the classical Bergman space in terms
of wandering subspaces had previously been established in deep work by
Aleman, Richter and Sundberg \cite{Aleman1996}. This representation
was generalized to other spaces related to the classical Bergman space
by Shimorin \cite{Shimorin01,Shimorin03} and by McCullough and Richter \cite{MR02}.

Returning to Beurling-type theorems involving vector-valued partially isometric
multipliers, the result of Ball and Bolotnikov has been extended by several authors.
In \cite{Sarkar2015}, the Bergman space is replaced with a more general reproducing kernel Hilbert
space of holomorphic functions for which $z$ is a contractive multiplier.
The paper \cite{Sarkar2016} further extends this result to the unit ball in $\bC^d$, where the role of $H^2$ is played by the Drury--Arveson space. This last generalization, along
with a uniqueness statement, is also obtained in \cite{Bhattacharjee2017} as part
of a wider investigation of dilations and wandering subspaces.

The purpose of this note is to exhibit a general Beurling--Lax--Halmos theorem for reproducing kernel Hilbert spaces whose kernel has a complete Nevanlinna--Pick factor.
The proof consists of general arguments involving reproducing kernels.
Using this approach, kernels with a complete Nevanlinna--Pick factor are no more difficult to treat
than complete Nevalinna--Pick kernels themselves.

The prototypical example of a reproducing kernel Hilbert space with a normalized complete Nevanlinna--Pick
kernel is the Hardy space $H^2$. Other examples include the Drury--Arveson space $H^2_d$ on the
unit ball in $\bC^d$ \cite{AM00,Arveson98} and (weighted) Dirichlet spaces \cite{Shimorin02}.
The reproducing kernel of the Bergman space on the unit disc is not a complete Nevanlinna--Pick kernel,
but it is the square of the kernel of $H^2$, so the kernel of $H^2$ is a complete Nevanlinna--Pick
factor.
Background information about reproducing kernel Hilbert spaces can be found in \cite{PR16},
for complete Nevanlinna--Pick spaces, see \cite{AM02}.

We now introduce some terminology and notation.
Let $X$ be a set and let \[k: X \times X \to \bC\] be a function.
If $k$ is positive semi-definite, in the sense that the $n\times n$ complex matrix $[k(x_i,x_j)]$ is positive semi-definite for every finite subset $\{x_1,\ldots,x_n\}\subset X$, then we write $k \ge 0$ and say that $k$ is a \emph{kernel}. The corresponding \emph{reproducing kernel Hilbert space} on $X$
is denoted by $\cH_k$. Thus,
\begin{equation*}
  \langle f, k(\cdot,x) \rangle = f(x) \quad \text{ for all } f \in \cH \text{ and } x \in X.
\end{equation*}
We write $\Mult(\cH_k)$ for the multiplier algebra of $\cH_k$, consisting of those functions $\varphi:X\to \bC$ such that $\varphi \cH_k\subset \cH_k$.
A kernel $k$ is said to be \emph{normalized} if there exists a point $x_0 \in X$ with $k(x,x_0) = 1$
for all $x \in X$. The main result of \cite{AM00} shows that a normalized kernel $s$ is a \emph{complete Nevanlinna--Pick}
kernel if and only if $s$ is non-vanishing and $1 - 1 /s \ge 0$.

We will also require vector-valued versions of the aforementioned objects. Given a Hilbert space $\cE$,
we may regard elements of $\cH_k \otimes \cE$ as $\cE$-valued
funtions on $X$. If $\cF$ is another Hilbert space and $\ell$ is another kernel on $X$, we write $\Mult(\cH_\ell \otimes \cF, \cH_k \otimes \cE)$
for the space of all $\cB(\cF,\cE)$-valued functions on $X$ that multiply $\cH_\ell \otimes \cF$ into $\cH_k \otimes \cE$.
A multiplier $\Phi \in \Mult(\cH_\ell \otimes \cF, \cH_k \otimes \cE)$ is said
to be contractive (respectively partially isometric)
if the associated multiplication operator $M_{\Phi}: \cH_\ell \otimes \cF \to \cH_k \otimes \cE$ is contractive (respectively partially isometric).

Next, we describe our main results, which will all be proven in Section \ref{sec:proofs}. Our general Beurling--Lax--Halmos theorem applies to pairs of kernels $k$ and $s$, where $s$ is a complete
Nevanlinna--Pick kernel and $k / s \ge 0$.
It follows from the Schur product theorem and a standard characterization of multipliers (see Lemma \ref{lem:mult_char} below) that if $k/s \ge 0$, then
$\Mult(\cH_s) \subset \Mult(\cH_k)$, and that the inclusion is contractive.
We say that a subspace $\cM \subset \cH_k \otimes \cE$ is $\Mult(\cH_s)$-invariant if $(M_\varphi \otimes I) \cM \subset \cM$ for all $\varphi \in \Mult(\cH_s)$.

\begin{thm}
  \label{thm:main}
  Let $X$ be a set, let $k$ be a kernel on $X$ and let $s$ be a normalized complete Nevanlinna--Pick kernel on $X$
  such that $k/s \ge 0$.
  Let $\cE$ be a Hilbert space and let
  $\cM \subset \cH_k \otimes \cE$ be a non-zero closed subspace.
  The following are equivalent:
  \begin{enumerate}[label=\normalfont{(\roman*)}]
    \item The subspace $\cM$ is $\Mult(\cH_s)$-invariant.
    \item There exist an auxiliary
  Hilbert space $\cF$ and a partially isometric multiplier $\Phi \in \Mult(\cH_s \otimes \cF, \cH_k \otimes \cE)$
  such that
  \begin{equation*}
    \cM = \Phi \cdot (\cH_s \otimes \cF).
  \end{equation*}
  \end{enumerate}
\end{thm}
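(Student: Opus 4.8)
The implication (ii) $\Rightarrow$ (i) is the routine direction. Given $\varphi \in \Mult(\cH_s)$, the scalar function $\varphi$ and the operator-valued function $\Phi$ commute at each point of $X$, so $(M_\varphi \otimes I_\cE) M_\Phi = M_\Phi (M_\varphi \otimes I_\cF)$, where on the left $\varphi$ is read as a multiplier of $\cH_k$ via the contractive inclusion $\Mult(\cH_s) \subset \Mult(\cH_k)$. Since $\cH_s \otimes \cF$ is itself $\Mult(\cH_s)$-invariant, applying this intertwining relation to $\cM = \Phi \cdot (\cH_s \otimes \cF)$ yields $(M_\varphi \otimes I_\cE)\cM \subset \cM$.

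For (i) $\Rightarrow$ (ii) I would work entirely with reproducing kernels. Write $E_x \colon \cH_k \otimes \cE \to \cE$ for evaluation at $x$, so that $E_x^* e = k(\cdot,x) \otimes e$, and let $K_\cM(x,y) = E_x P_\cM E_y^* \in \cB(\cE)$ be the reproducing kernel of $\cM$. The entire argument reduces to the single positivity statement
\begin{equation*}
  \frac{K_\cM(x,y)}{s(x,y)} \ge 0 \quad \text{as a } \cB(\cE)\text{-valued kernel on } X.
\end{equation*}
Once this is in hand, a Kolmogorov decomposition supplies a Hilbert space $\cF$ and a function $\Phi \colon X \to \cB(\cF,\cE)$ with $K_\cM(x,y) = s(x,y) \Phi(x)\Phi(y)^*$. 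The multiplier characterization (Lemma~\ref{lem:mult_char}) then shows that $\Phi \in \Mult(\cH_s \otimes \cF, \cH_k \otimes \cE)$ is contractive, since $k(x,y) I_\cE - s(x,y) \Phi(x)\Phi(y)^* = E_x (I - P_\cM) E_y^* \ge 0$; testing the operator $M_\Phi M_\Phi^*$ against kernel functions forces $M_\Phi M_\Phi^* = P_\cM$, so $M_\Phi$ is a partial isometry with $\Phi \cdot (\cH_s \otimes \cF) = \operatorname{ran} M_\Phi = \cM$.

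Everything thus rests on the displayed positivity, which is the crux. Here I would invoke the Agler--McCarthy description \cite{AM00}: as $s$ is a normalized complete Nevanlinna--Pick kernel, there exist an auxiliary Hilbert space $\cD$ and a function $b \colon X \to \cD$ with $1/s(x,y) = 1 - \langle b(x), b(y) \rangle$. Because $k/s \ge 0$, the row function $b$ (tensored with $I_\cE$) is a contractive multiplier of $(\cH_k \otimes \cE) \otimes \cD$ into $\cH_k \otimes \cE$: its defect kernel is exactly $k(x,y)\bigl(1 - \langle b(x),b(y)\rangle\bigr) I_\cE = (k/s)(x,y) I_\cE \ge 0$. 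Let $B$ denote the associated row contraction, so $BB^* \le I$. Expanding with $1/s = 1 - \langle b, b \rangle$ and recognizing $\langle b(x),b(y)\rangle K_\cM(x,y) = E_x\, B(P_\cM \otimes I_\cD) B^*\, E_y^*$ identifies
\begin{equation*}
  \frac{K_\cM(x,y)}{s(x,y)} = E_x \bigl[ P_\cM - B(P_\cM \otimes I_\cD) B^* \bigr] E_y^*,
\end{equation*}
so it suffices to prove $P_\cM - B(P_\cM \otimes I_\cD) B^* \ge 0$.

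This is precisely where hypothesis (i) is used. The $\Mult(\cH_s)$-invariance of $\cM$ means each $M_{b_i} \otimes I_\cE$ leaves $\cM$ invariant, so the operator $C := B(P_\cM \otimes I_\cD)$ has range contained in $\cM$ and satisfies $\|C\| \le \|B\| \le 1$. Consequently $B(P_\cM \otimes I_\cD) B^* = CC^* \le P_{\overline{\operatorname{ran}} C} \le P_\cM$, which gives the required inequality. I expect this interplay---row contractivity coming from $k/s \ge 0$ on one side, and invariance forcing $\operatorname{ran} C \subset \cM$ on the other---to be the only genuinely substantive point, with the remaining steps being formal manipulations of reproducing kernels together with the multiplier characterization.
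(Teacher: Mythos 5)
Your proposal is correct and follows essentially the same route as the paper: the Agler--McCarthy representation $1/s = 1 - \langle b, b\rangle$, the observation that $k/s \ge 0$ makes $b \otimes I_{\cE}$ a contractive (row) multiplier while $\Mult(\cH_s)$-invariance forces it to map $\cM \otimes \cD$ into $\cM$, the resulting positivity of $K_{\cM}/s$, and then Kolmogorov factorization plus the co-isometric case of Lemma \ref{lem:mult_char} to produce the partially isometric $\Phi$. The only cosmetic difference is that you verify the key positivity through the operator inequality $P_{\cM} - B(P_{\cM}\otimes I_{\cD})B^* \ge 0$, whereas the paper (Lemma \ref{lem:invariant_subspace}) phrases the same fact as ``$B$ restricts to a contractive multiplier from $\cM \otimes \cL$ to $\cM$'' and cites Lemma \ref{lem:mult_char}(a).
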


Specializing to the case where $k=s$ recovers the result of McCullough and Trent \cite{MT00}.
As explained in \cite[Theorem 0.7]{MT00}, the Nevanlinna--Pick assumption on $s$ is already necessary in this special case. It is natural to wonder whether the representation obtained in Theorem \ref{thm:main} is unique in some appropriate sense. We show that this is indeed the case in Proposition \ref{prop:unique}, thereby generalizing \cite[Theorem 4.2]{MT00} and \cite[Theorem 6.5]{Bhattacharjee2017}.

\begin{rem}
  \label{rem:row_contr}
If $\cH_s = H^2$ is the Hardy space on the unit disc, then $k/s \ge 0$ if and only if the identity function
  $z$ is a contractive multiplier of $\cH_k$. More generally,
  if $\cH_s = H^2_d$ is the Drury--Arveson space, then $k/s \ge 0$
  if and only if the coordinate functions $z_1,\ldots,z_d$ form a row
  contraction on $\cH_k$, see Lemma \ref{lem:mult_char} (a) below. More examples of pairs of spaces that satisfy this positivity condition can
be found in \cite[Subsection 2.2]{AHM+17c}.
\end{rem}

More broadly, the positivity condition $k/s \ge 0$, where $s$ is a normalized complete Nevanlinna--Pick kernel,
has a number of function theoretic and dilation theoretic implications (see, for
instance,  \cite{AHM+17c} and \cite{CH18}). 

In concrete cases of spaces of holomorphic functions on the unit ball in $\bC^d$,
the assumption of $\Mult(\cH_s)$-invariance in Theorem \ref{thm:main} can often be weakened to invariance under multiplication by the coordinate functions, as we show in Proposition \ref{prop:coord}. This applies in particular to the Drury--Arveson space or more generally to a
unitarily invariant space on $\bB_d$ (see \cite[Section 4]{GRS2002} or \cite[Section 7]{hartz2017isom}).
In light of Remark \ref{rem:row_contr}, we therefore
recover the Beurling--Lax--Halmos theorems of \cite{Ball2013,Bhattacharjee2017,Sarkar2015,Sarkar2016} in this case.

Given a pair of nested invariant subspaces, one can show that there exists
a factorization of the corresponding representations in the form of the following result. The case $k=s$ is
also due to McCullough and Trent \cite[Theorem 0.14]{MT00}.

\begin{thm}
  \label{thm:factorization}
  Assume the setting of Theorem {\rm \ref{thm:main}}. Let $\cN \subset \cM \subset \cH_k \otimes \cE$
  be two non-zero closed subspaces and let $\cF, \cG$ be Hilbert spaces. If $\Phi \in \Mult(\cH_s \otimes \cF,
  \cH_k \otimes \cE)$ and $\Psi \in \Mult(\cH_s \otimes \cG, \cH_k \otimes \cE)$ are partially
  isometric multipliers with $\cM = \Phi \cdot (\cH_s \otimes \cF)$ and $\cN = \Psi \cdot (\cH_s \otimes \cG)$,
  then there exists a contractive multiplier $\Gamma \in \Mult(\cH_s \otimes \cG, \cH_s \otimes \cF)$
  with $\Psi = \Phi \Gamma$.
\end{thm}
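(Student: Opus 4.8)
The plan is to reduce the existence of $\Gamma$ to a single positivity condition and then to invoke the factorization property that characterizes complete Nevanlinna--Pick kernels. The starting point is that $M_\Phi$ and $M_\Psi$ are partial isometries, so $M_\Phi M_\Phi^*$ and $M_\Psi M_\Psi^*$ are the orthogonal projections onto $\cM$ and $\cN$ respectively. Using the formula for the adjoint of a multiplier on kernel functions, namely $M_\Phi^*(k(\cdot,y) \otimes e) = s(\cdot,y) \otimes \Phi(y)^* e$ (Lemma \ref{lem:mult_char}), together with $M_\Phi(s(\cdot,y) \otimes \zeta)(x) = s(x,y) \Phi(x) \zeta$, a direct computation identifies the $\cB(\cE)$-valued reproducing kernel of $\cM$ with $(x,y) \mapsto s(x,y) \Phi(x) \Phi(y)^*$, and likewise that of $\cN$ with $(x,y) \mapsto s(x,y) \Psi(x) \Psi(y)^*$.

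Since $\cN \subset \cM$, the projections obey $M_\Psi M_\Psi^* \le M_\Phi M_\Phi^*$, and compressing this inequality to the evaluation maps at finitely many points shows that the reproducing kernel of $\cN$ is dominated by that of $\cM$. By the previous step this is precisely the statement that the $\cB(\cE)$-valued kernel
\[
  (x,y) \longmapsto s(x,y)\big(\Phi(x)\Phi(y)^* - \Psi(x)\Psi(y)^*\big)
\]
is positive semi-definite. This positivity is the hypothesis of a Leech-type theorem for the complete Nevanlinna--Pick kernel $s$: it guarantees a function $\Gamma \colon X \to \cB(\cG,\cF)$ satisfying $\Phi(x)\Gamma(x) = \Psi(x)$ for all $x$ and such that $(x,y) \mapsto s(x,y)(I_\cF - \Gamma(x)\Gamma(y)^*)$ is positive semi-definite. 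The latter condition is, again by Lemma \ref{lem:mult_char}, exactly the assertion that $\Gamma$ is a contractive multiplier in $\Mult(\cH_s \otimes \cG, \cH_s \otimes \cF)$; and $\Phi\Gamma = \Psi$ then reads $M_\Phi M_\Gamma = M_\Psi$, which is what we want.

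I expect the last step to be the main obstacle. The operator inequality $M_\Psi M_\Psi^* \le M_\Phi M_\Phi^*$ already yields, by the Douglas factorization lemma, a \emph{contraction} $C$ with $M_\Phi C = M_\Psi$; indeed one may take $C = M_\Phi^* M_\Psi$. The difficulty is that such a $C$ need not be a multiplier: while $\ker M_\Phi$ is $\Mult(\cH_s)$-invariant, its orthogonal complement is in general not, so $M_\Phi^* M_\Psi$ does not intertwine the action of $\Mult(\cH_s)$. Promoting the abstract contraction to a genuine \emph{contractive multiplier} is exactly the point at which the complete Nevanlinna--Pick hypothesis on $s$ becomes indispensable, through the vector-valued Leech theorem invoked above; isolating and applying this factorization result in the required generality is the technical core of the argument.
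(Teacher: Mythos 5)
Your proof is correct and follows essentially the same route as the paper's: identify the reproducing kernels of $\cM$ and $\cN$ as $s(x,y)\Phi(x)\Phi(y)^*$ and $s(x,y)\Psi(x)\Psi(y)^*$ via Lemma \ref{lem:mult_char}(b), deduce the kernel domination from $\cN \subset \cM$, and feed the resulting positivity of $s(x,y)\bigl(\Phi(x)\Phi(y)^* - \Psi(x)\Psi(y)^*\bigr)$ into Leech's theorem for complete Nevanlinna--Pick kernels (the paper cites \cite[Theorem 8.57]{AM02} for exactly this step, noting only that its irreducibility hypothesis can be relaxed to normalization). Your derivation of the kernel inequality via the projection ordering $M_\Psi M_\Psi^* \le M_\Phi M_\Phi^*$ is an immaterial variant of the paper's appeal to Lemma \ref{lem:mult_char}(a) with the constant multiplier $I_{\cE}$, and your closing remarks correctly locate the role of the CNP hypothesis.
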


In fact, in Theorem \ref{thm:factorization_three_space} we will establish a more general version of the above result, involving three kernels $k,\ell$ and $s$.

McCullough and Trent constructed an example to show that even when $k=s$, the contractive multiplier $\Gamma$ in Theorem \ref{thm:factorization}
cannot, in general, be taken to be partially isometric if $\Psi$ is specified in advance, see Section 5 of \cite{MT00}. They conjectured, however, that there always
exists some choice of $\Psi$ such that $\Gamma$ can be taken to be partially isometric, see \cite[Conjecture 5.1]{MT00}
for the precise statement.
This conjecture was settled in the affirmative by Arias \cite[Conjecture 6.1]{arias2004}.
We extend Arias's result to pairs of spaces, and in doing so give an alternative proof of his result
in the language of reproducing kernel Hilbert spaces.

\begin{thm}
  \label{thm:factorization2}
  Assume the setting of Theorem {\rm \ref{thm:main}}. Let $\cN \subset \cM \subset \cH_k \otimes \cE$
  be two non-zero closed subspaces and let $\cF$ be a Hilbert space. If $\Phi \in \Mult(\cH_s \otimes \cF,
  \cH_k \otimes \cE)$ is a partially isometric multiplier with $\cM = \Phi \cdot (\cH_s \otimes \cF)$,
  and if $\cN$ is $\Mult(\cH_s)$-invariant, then there exist a Hilbert space $\cG$
  and a partially isometric multiplier $\Gamma \in \Mult(\cH_s \otimes \cG, \cH_s \otimes \cF)$
  such that $\Phi \Gamma \in \Mult(\cH_s \otimes \cG, \cH_k \otimes \cE)$ is a partially isometric multiplier and $\cN = (\Phi \Gamma) \cdot (\cH_s \otimes \cG)$.
\end{thm}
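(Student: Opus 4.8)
The plan is to represent $\cN$ not directly, but through its full preimage under $M_\Phi$; the gain is that this preimage necessarily contains $\ker M_\Phi$, and that containment is exactly what will force the resulting composite multiplier to be partially isometric.

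First I would set $\cN'' := \{ g \in \cH_s \otimes \cF : M_\Phi g \in \cN\}$. Since $M_\Phi$ is continuous and $\cN$ is closed, $\cN''$ is a closed subspace of $\cH_s \otimes \cF$ containing $\ker M_\Phi$. Because $\Phi$ is a multiplier, $M_\Phi$ intertwines the two multiplier actions, $M_\Phi(M_\varphi \otimes I) = (M_\varphi \otimes I) M_\Phi$ for every $\varphi \in \Mult(\cH_s)$; combined with the $\Mult(\cH_s)$-invariance of $\cN$, this shows immediately that $\cN''$ is $\Mult(\cH_s)$-invariant. Moreover, since $M_\Phi$ is a partial isometry its range $\cM$ is closed and $M_\Phi(\cN'') = \cN$, so in particular $\cN'' \neq 0$. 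Next I would apply Theorem \ref{thm:main} with $k$ replaced by $s$ (note $s/s \equiv 1 \ge 0$) and $\cE$ replaced by $\cF$, to the $\Mult(\cH_s)$-invariant subspace $\cN'' \subset \cH_s \otimes \cF$. This yields a Hilbert space $\cG$ and a partially isometric multiplier $\Gamma \in \Mult(\cH_s \otimes \cG, \cH_s \otimes \cF)$ with $\operatorname{ran} M_\Gamma = \cN''$. Since products of multipliers are multipliers, $\Phi\Gamma \in \Mult(\cH_s \otimes \cG, \cH_k \otimes \cE)$ with $M_{\Phi\Gamma} = M_\Phi M_\Gamma$, and its range is $M_\Phi(\cN'') = \cN$.

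The crux is then to verify that $M_\Phi M_\Gamma$ is a partial isometry. The key observation is the orthogonal decomposition $\cN'' = \ker M_\Phi \oplus \cN'$, where $\cN' := \cN'' \ominus \ker M_\Phi$ lies in the initial space $(\ker M_\Phi)^\perp$, on which $M_\Phi$ is isometric. Pulling this decomposition back through the partial isometry $M_\Gamma$, the initial space $(\ker M_\Gamma)^\perp$ splits orthogonally into the preimage of $\ker M_\Phi$ and the preimage of $\cN'$: on the former $M_\Phi M_\Gamma$ vanishes, while on the latter $M_\Gamma$ is isometric into $(\ker M_\Phi)^\perp$, where $M_\Phi$ is in turn isometric. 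Hence $M_\Phi M_\Gamma$ is isometric on the orthogonal complement of its kernel and zero on its kernel, i.e. it is a partial isometry. Equivalently, one checks that $(M_\Phi M_\Gamma)^* M_\Phi M_\Gamma = M_\Gamma^* P M_\Gamma$, with $P$ the projection onto $(\ker M_\Phi)^\perp$, is idempotent, which reduces to the fact that the compression of $P$ to $\cN''$ is a projection because $\cN''$ is spanned by $\cN' \subset \operatorname{ran} P$ and $\ker M_\Phi = \ker P$.

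I expect this last partial-isometry verification to be the only real obstacle, and the whole construction is arranged to surmount it: representing the full preimage $\cN''$ rather than $\cN$ (or the isometric copy $\cN'$) guarantees $\ker M_\Phi \subseteq \operatorname{ran} M_\Gamma$, and this compatibility between $\operatorname{ran} M_\Gamma$ and the kernel/initial-space splitting of $M_\Phi$ is precisely what fails for a generic inner representation of $\cN$. This is consistent with the McCullough--Trent example showing that the representation of $\cN$ cannot be prescribed in advance, since there the range of the inner multiplier need not contain $\ker M_\Phi$.
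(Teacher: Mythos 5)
Your proof is correct, and it takes a genuinely different route from the paper's. The paper first invokes Theorem \ref{thm:main} to produce a partially isometric $\Psi \in \Mult(\cH_s \otimes \cG_0, \cH_k \otimes \cE)$ representing $\cN$, then uses the Leech-theorem-based factorization result (Theorem \ref{thm:factorization}) to write $\Psi = \Phi\Gamma_0$ with $\Gamma_0$ contractive, sets $\cL = \overline{\Gamma_0 \cdot (\cH_s \otimes \cG_0)}$, applies Theorem \ref{thm:main} a second time to represent $\cL$, and closes with the operator sandwich $P_{\cN} \ge M_\Phi P_{\cL} M_\Phi^* \ge M_\Psi M_\Psi^* = P_{\cN}$. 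You instead take the full preimage $\cN'' = M_\Phi^{-1}(\cN)$, apply Theorem \ref{thm:main} only once (with $k$ replaced by $s$, using $s/s \equiv 1 \ge 0$), and verify the partial isometry property directly from the containment $\ker M_\Phi \subseteq \operatorname{ran} M_\Gamma = \cN''$, which makes $\operatorname{ran} M_\Gamma$ split compatibly with the kernel/initial-space decomposition of $M_\Phi$ (equivalently, the projections onto $\cN''$ and onto $(\ker M_\Phi)^\perp$ commute). Your approach buys self-containedness: it bypasses Leech's theorem (\cite[Theorem 8.57]{AM02}) entirely, needs a single invocation of Theorem \ref{thm:main}, and exposes the geometric mechanism that also explains the McCullough--Trent obstruction for prescribed representations of $\cN$. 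The paper's approach buys economy within its own architecture: Theorem \ref{thm:factorization} is established anyway (and is of independent interest), and the sandwich argument identifies $M_{\Phi\Gamma}M_{\Phi\Gamma}^*$ as $P_{\cN}$ in one line without any kernel analysis; note also that the paper's subspace $\cL$ satisfies $\cL \subseteq \cN''$ but need not contain $\ker M_\Phi$, so the two constructions really are different --- your preimage is the maximal admissible choice, while the paper's argument shows that partial isometry of the composition can be obtained without that maximality. The only implicit point in your write-up, which is harmless since the paper records it in the introduction, is that the intertwining $M_\Phi(M_\varphi \otimes I_{\cF}) = (M_\varphi \otimes I_{\cE})M_\Phi$ presupposes $\Mult(\cH_s) \subseteq \Mult(\cH_k)$, a consequence of $k/s \ge 0$.
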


In the classical theory of multiplier invariant subspaces of $H^2$ and of inner functions, boundary values play an important role.
Greene, Richter and Sundberg \cite{GRS2002} studied boundary values of partially isometric multipliers of
many complete Nevanlinna--Pick spaces on the unit ball in $\bC^d$. They were thus able to strenghten
the analogy between the theorem
of McCullough and Trent and the classical Beurling--Lax--Halmos theorem. In Section \ref{sec:further}, we discuss
the possibility of boundary value results for pairs of spaces.

\section{Proofs and additional results}
\label{sec:proofs}

\subsection{Preliminary lemmas}

We require the following well-known lemma. 
Given a Hilbert space $\cE$, a \emph{$\cB(\cE)$-valued kernel} on $X$
is a positive semi-definite function $K: X \times X \to \cB(\cE)$. As in the scalar
case where $\cE=\bC$, we denote the associated reproducing kernel Hilbert space of $\cE$-valued
functions by $\cH_K$.
Note that if $k:X\times X\to \bC$ is a usual scalar-valued kernel, then $\cH_k\otimes \cE$ is the reproducing kernel Hilbert space associated to the $\cB(\cE)$-valued kernel $k I_{\cE}$.

\begin{lem}
  \label{lem:mult_char}
  Let $\cE,\cF$ be Hilbert spaces, let $E$ be a $\cB(\cE)$-valued kernel on $X$ and let $F$ be a $\cB(\cF)$-valued
  kernel on $X$. Let $\Phi: X \to \cB(\cE,\cF)$ be a function and define
  \begin{equation*}
    L(z,w) = F(z,w) - \Phi(z) E(z,w) \Phi(w)^* \quad (z,w \in X).
  \end{equation*}
  Then,
  \begin{enumerate}[label=\normalfont{(\alph*)}]
    \item $\Phi$ is a contractive multiplier from $\cH_E$ to $\cH_F$ if and only if $L \ge 0$.
    \item $\Phi$ is a co-isometric multiplier from $\cH_E$ to $\cH_F$ if and only if $L = 0$.
  \end{enumerate}
\end{lem}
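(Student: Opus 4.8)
The plan is to prove this as a standard reproducing-kernel factorization/Moore-type characterization. The key principle I would invoke is the following: for a $\cB(\cE)$-valued kernel $K$ on $X$ and a family of vectors $\{v_x\}_{x\in X}$ with $v_x \in \cE$ (or more generally a family of operators), a function is realized as a well-defined bounded multiplication operator precisely when a certain positive-semidefiniteness condition holds. Concretely, the workhorse is the observation that for an operator $T$ between two reproducing kernel Hilbert spaces $\cH_E$ and $\cH_F$, the adjoint $T^*$ acts on kernel functions; and for a multiplier $\Phi$, one has the defining relation
\[
  (M_\Phi^* (F(\cdot,w) c))(z) = E(z,w)\, \Phi(w)^* c
\]
for all $z,w\in X$ and $c\in\cF$, i.e.\ $M_\Phi^*$ sends the kernel function $F(\cdot,w)c$ to $E(\cdot,w)\Phi(w)^* c$. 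This is the single computation that underlies both parts.

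For part (a), first I would establish that $\Phi$ is a contractive multiplier if and only if $\|M_\Phi\| \le 1$, i.e.\ $\|M_\Phi^*\| \le 1$, which is equivalent to $I - M_\Phi M_\Phi^* \ge 0$. The standard move is then to test positivity on the dense span of kernel functions: $I - M_\Phi M_\Phi^* \ge 0$ holds if and only if for every finite collection $\{w_1,\dots,w_n\}\subset X$ and vectors $c_1,\dots,c_n\in\cF$,
\[
  \sum_{i,j} \big\langle (I - M_\Phi M_\Phi^*)\, F(\cdot,w_j) c_j,\ F(\cdot,w_i) c_i \big\rangle \ge 0.
\]
Using the reproducing property and the adjoint formula above, each inner product evaluates to $\langle F(w_i,w_j) c_j, c_i\rangle - \langle \Phi(w_i) E(w_i,w_j)\Phi(w_j)^* c_j, c_i\rangle = \langle L(w_i,w_j) c_j, c_i\rangle$. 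Hence the double sum is exactly the quadratic form associated with the matrix $[L(w_i,w_j)]$, so nonnegativity for all such data is precisely the statement $L \ge 0$. Part (b) is then immediate from the same computation: $\Phi$ is a co-isometry iff $M_\Phi M_\Phi^* = I$, which by the identical testing argument is equivalent to $\langle L(w_i,w_j)c_j,c_i\rangle = 0$ for all data, i.e.\ $L = 0$.

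The one genuine subtlety—and the step I expect to require the most care—is the well-definedness and boundedness question that is hidden in the phrase ``$\Phi$ is a contractive multiplier.'' A priori $\Phi$ is merely a function $X\to\cB(\cE,\cF)$, and one must argue that $L\ge 0$ already forces $\Phi$ to map $\cH_E$ boundedly into $\cH_F$, not just that a multiplier which happens to be contractive satisfies $L\ge 0$. I would handle this by running the kernel-function computation in reverse: assuming $L\ge 0$, the estimate $\sum_{i,j}\langle \Phi(w_i) E(w_i,w_j)\Phi(w_j)^* c_j, c_i\rangle \le \sum_{i,j}\langle F(w_i,w_j)c_j,c_i\rangle$ shows that the densely-defined operator sending $F(\cdot,w)c \mapsto E(\cdot,w)\Phi(w)^* c$ is bounded by $1$, hence extends to a contraction whose adjoint is the desired contractive multiplication operator $M_\Phi$; a routine check confirms that this adjoint indeed acts as pointwise multiplication by $\Phi$. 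Once this bootstrapping is in place, both equivalences follow from the single identity relating the Gram matrix of $(I - M_\Phi M_\Phi^*)$ on kernel functions to $[L(w_i,w_j)]$.
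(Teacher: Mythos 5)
Your proposal is correct and takes essentially the same approach as the paper: the key identity $M_\Phi^*\bigl(F(\cdot,w)c\bigr) = E(\cdot,w)\Phi(w)^*c$ together with testing $I - M_\Phi M_\Phi^*$ against finite linear combinations of kernel functions is exactly the computation the paper uses to prove part (b). The only difference is that the paper outsources part (a) to \cite[Theorem 6.28]{PR16}, whereas you prove it directly, including the bootstrapping step that $L \ge 0$ forces $\Phi$ to be a well-defined contractive multiplier; that is precisely the standard proof of the cited result, so the two arguments coincide in substance.
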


\begin{proof}
  Part (a) is essentially contained in \cite[Theorem 6.28]{PR16}.
  To prove (b), we may assume by (a) that $\Phi$ is a multiplier. Let $\xi,\eta \in \cF$,
  and let $E_w = E(\cdot,w)$ and $F_w = F(\cdot,w)$.
  Then
  \begin{align*}
    \langle  \Phi(z) E(z,w) \Phi(w)^* \xi,\eta \rangle_{\cF} &=
    \langle E_w \Phi(w)^* \xi, E_z \Phi(z)^* \eta \rangle_{\cH_E}\\
    &= \langle M_\Phi^* F_w \xi, M_\Phi^* F_z \eta \rangle _{\cH_E},
  \end{align*}
  whereas 
  \[
    \langle F(z,w) \xi, \eta \rangle_{\cF} = \langle F_w \xi, F_z \eta \rangle_{\cH_F}
  \]
   from which the result follows.
\end{proof}

Observe that if $\cM\subset \cH_k\otimes \cE$ is a closed subspace, then it is itself a reproducing kernel Hilbert space.
The scalar-valued version of the following lemma can be found in \cite[Proposition 2.4]{AHM+17c}.
The proof of the general case is almost identical. For the convenience of the reader, we provide the short argument.

\begin{lem}
  \label{lem:invariant_subspace}
  In the setting of Theorem {\rm \ref{thm:main}}, let $k^{\cM}$ denote the $\cB(\cE)$-valued reproducing
  kernel of $\cM$. Then $k^{\cM} / s \ge 0$.
\end{lem}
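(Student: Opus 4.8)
The plan is to recognize the condition $k^{\cM}/s \ge 0$ as a row-contractivity statement for the multiplication operators induced by $s$, and then to derive it on $\cM$ from the corresponding statement on the ambient space $\cH_k \otimes \cE$ by exploiting invariance. Since $s$ is a normalized complete Nevanlinna--Pick kernel, we have $1 - 1/s \ge 0$, so I would first factor this positive kernel as $1 - \tfrac{1}{s(z,w)} = \sum_i b_i(z)\overline{b_i(w)}$. A short application of Lemma \ref{lem:mult_char}(a) shows that the scalar functions $b_i$ lie in $\Mult(\cH_s)$ and that the row $(b_i)_i$ is a contractive multiplier from $\cH_s \otimes \ell^2$ into $\cH_s$: for \emph{any} such decomposition the associated defect kernel is $s(z,w)\bigl(1 - \sum_i b_i(z)\overline{b_i(w)}\bigr) = 1 \ge 0$, so multiplier-ness of the $b_i$ is automatic.

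Because $k/s \ge 0$, we have $\Mult(\cH_s) \subseteq \Mult(\cH_k)$, so each $b_i$ is a multiplier of $\cH_k$ and the corresponding multiplication operators $M_{b_i}$ (acting as $b_i \otimes I_{\cE}$) are defined on $\cH_k \otimes \cE$. Applying Lemma \ref{lem:mult_char}(a) once more — now to the row built from the $b_i$, viewed as a multiplier $\cH_k \otimes \cE \otimes \ell^2 \to \cH_k \otimes \cE$, whose defect kernel equals $(k/s)\,I_{\cE} \ge 0$ — shows that $(M_{b_i})_i$ is a row contraction on $\cH_k \otimes \cE$, that is, $\sum_i M_{b_i} M_{b_i}^* \le I$. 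This is exactly the coordinate row-contraction interpretation of $k/s \ge 0$ recorded in Remark \ref{rem:row_contr}.

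The key step is to transfer this row contraction to the invariant subspace $\cM$. Since each $b_i \in \Mult(\cH_s)$ and $\cM$ is $\Mult(\cH_s)$-invariant, we have $M_{b_i}\cM \subseteq \cM$, so the restrictions $T_i := M_{b_i}|_{\cM}$ are bounded operators on $\cM$. Denoting by $P$ the orthogonal projection of $\cH_k \otimes \cE$ onto $\cM$, one computes $T_i^* = P M_{b_i}^*|_{\cM}$, and therefore, for every $g \in \cM$,
\[
  \sum_i \|T_i^* g\|^2 = \sum_i \|P M_{b_i}^* g\|^2 \le \sum_i \|M_{b_i}^* g\|^2 \le \|g\|^2 .
\]
Hence $(T_i)_i$ is again a row contraction, this time on $\cM$. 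Since $\cM = \cH_{k^{\cM}}$ and the $T_i$ are precisely the operators of multiplication by the $b_i$ on $\cH_{k^{\cM}}$, a final application of Lemma \ref{lem:mult_char}(a) — with source kernel $k^{\cM} \otimes I_{\ell^2}$ and target kernel $k^{\cM}$ — yields that the defect kernel
\[
  k^{\cM}(z,w) - \sum_i b_i(z)\, k^{\cM}(z,w)\, \overline{b_i(w)} = k^{\cM}(z,w)\Bigl(1 - \sum_i b_i(z)\overline{b_i(w)}\Bigr) = \frac{k^{\cM}(z,w)}{s(z,w)}
\]
is positive semi-definite, which is the desired conclusion $k^{\cM}/s \ge 0$.

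I expect the main obstacle to be organizational rather than conceptual: one must set up the operator-valued form of Lemma \ref{lem:mult_char} carefully across the three spaces $\cH_s$, $\cH_k \otimes \cE$ and $\cM$, keep track of the auxiliary index space $\ell^2$, and confirm that the $b_i$ are genuine $\Mult(\cH_s)$-multipliers so that the invariance of $\cM$ may legitimately be invoked. The one genuinely substantive point is the observation in the third paragraph that row-contractivity — equivalently, positivity of the Schur quotient by $s$ — descends to jointly invariant subspaces, which rests on the elementary inequality $\|P M_{b_i}^* g\| \le \|M_{b_i}^* g\|$ for the compressed adjoints.
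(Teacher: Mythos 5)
Your proof is correct and follows essentially the same route as the paper: both factor $1 - 1/s$ using the Agler--McCarthy characterization of complete Nevanlinna--Pick kernels, use $k/s \ge 0$ together with Lemma \ref{lem:mult_char}(a) to see that the resulting row is a contractive multiplier into $\cH_k \otimes \cE$, use $\Mult(\cH_s)$-invariance of $\cM$ to restrict that multiplier to $\cM$, and apply Lemma \ref{lem:mult_char}(a) once more to read off $k^{\cM}/s \ge 0$ from the defect kernel. The only cosmetic differences are that the paper works with an abstract $\cB(\cL,\bC)$-valued function $b$ rather than scalar coordinates $(b_i)$ indexed by $\ell^2$, and it verifies contractivity of the restricted multiplier directly (a contraction mapping $\cM \otimes \cL$ into $\cM$ restricts to a contraction on the subspace norms) instead of via your compression inequality for the adjoints.
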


\begin{proof}
  Since $s$ is a complete Nevanlinna--Pick kernel, there exists by the main result of \cite{AM00}
  a Hilbert space $\cL$
  and a function $b: X \to \cB(\cL,\bC)$ such that 
  \[
  s(z,w) = (1 - b(z) b(w)^*)^{-1}.
  \]
  Furthermore, the assumption $k/s \ge 0$ implies that
  $b$ is a contractive multiplier from $\cH_k \otimes \cL$ to $\cH_k$ by part (a) of Lemma \ref{lem:mult_char}.
  Thus, if we define $B (z) = \id_{\cE} \otimes b(z)$ for $z \in X$, then it is readily verified that $B$ is a contractive multiplier from $\cH_k \otimes \cE \otimes \cL$
  to $\cH_k \otimes \cE$.
  Now, $\Mult(\cH_s)$-invariance of $\cM$ implies that $B$ multiplies $\cM \otimes \cL$ into $\cM$.
  Indeed, $b$ is a contractive multiplier from $\cH_s \otimes \cL$ to $\cH_s$ (by part (a) of Lemma \ref{lem:mult_char}), so for every vector $\eta \in \cL$, the function
  $z\mapsto b(z)\eta$
  is a multiplier of $\cH_s$. Consequently, if $F \in \cM$ then
  \[
  B \cdot (F \otimes \eta) = b(\cdot) \eta F \in \cM.
  \]
  Thus, $B$ is also
  a contractive multiplier from $\cM \otimes \cL$ to $\cM$, and yet another application of part (a) of Lemma \ref{lem:mult_char} reveals that $k^{\cM} / s \ge 0$ since
  \[
  (k^{\cM}/s)(z,w)=k^{\cM}(z,w)-B(z) (k^{\cM}(z,w) \otimes I_{\cL})B(w)^*. \qedhere
  \]
\end{proof}

We emphasize here that the previous lemma is where we crucially make use of the fact that $s$ is a complete Nevanlinna-Pick kernel.

\subsection{Beurling--Lax--Halmos theorem and uniqueness}
Our Beurling--Lax--Halmos theorem is an immediate consequence of the preceding two lemmas.
\begin{proof}[Proof of Theorem \ref{thm:main}]
  We prove the non-trivial direction (i) $\Rightarrow$ (ii).
  Let $k^{\cM}$ denote the $\cB(\cE)$-valued reproducing kernel of $\cM$. Lemma \ref{lem:invariant_subspace}
  implies that $k^{\cM} / s \ge 0$, hence there exist a Hilbert space $\cF$
  and a function $\Phi: X \to \cB(\cF,\cE)$ such that
  \begin{equation}
    \label{eqn:quotient_factored}
    \frac{k^{\cM}(z,w)}{s(z,w)} = \Phi(z) \Phi(w)^* \quad (z,w \in X).
  \end{equation}
  Part (b) of Lemma \ref{lem:mult_char} shows that $\Phi$ is a co-isometric
  multiplier from $\cH_s \otimes \cF$ to $\cM$, which finishes the proof.
\end{proof}

We now formulate and prove the uniqueness statement.

\begin{prop}
  \label{prop:unique}
  Assume the setting of Theorem {\rm \ref{thm:main}}.
  \begin{enumerate}[label=\normalfont{(\alph*)}]
\item If
  $\Phi \in \Mult(\cH_s \otimes \cF, \cH_k \otimes \cE)$ and
  $\widetilde \Phi \in \Mult(\cH_s \otimes \widetilde \cF, \cH_k \otimes \cE)$ are two
  partially isometric multipliers as in Theorem {\rm \ref{thm:main}}, then there exists
  a partial isometry $V: \cF \to \widetilde \cF$ with
  \begin{equation*}
    \Phi(z) = \widetilde \Phi(z) V \quad \text{ and } \quad \widetilde \Phi(z) = \Phi(z) V^* \quad \text{ for all } z \in X.
  \end{equation*}
\item The multiplier $\Phi \in \Mult(\cH_s \otimes \cF, \cH_k \otimes \cE)$ in Theorem {\rm \ref{thm:main}} can be chosen
  to be minimal in the following sense:
  if $\widetilde \Phi \in \Mult(\cH_s \otimes \widetilde \cF, \cH_k \otimes \cE)$
  is another partially isometric multiplier with $\cM = \widetilde \Phi \cdot (\cH_s \otimes \widetilde \cF)$,
  then any partial isometry $V: \cF \to \widetilde \cF$ as in part {\rm (a)} is an isometry.
  In particular, up to unitary equivalence there is a unique such minimal multiplier.
  \end{enumerate}
\end{prop}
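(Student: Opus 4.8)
The plan is to observe that both partially isometric multipliers encode the \emph{same} factorization of the positive kernel $k^{\cM}/s$, after which everything reduces to the essential uniqueness of such factorizations. Since a partial isometry whose range is exactly the closed subspace $\cM$ is automatically a co-isometry onto $\cM$, each of $\Phi$ and $\widetilde\Phi$ is a co-isometric multiplier onto $\cM$, and so part (b) of Lemma \ref{lem:mult_char} (applied with target kernel $k^{\cM}$) yields
\begin{equation*}
  \frac{k^{\cM}(z,w)}{s(z,w)} = \Phi(z)\Phi(w)^* = \widetilde\Phi(z)\widetilde\Phi(w)^* \qquad (z,w \in X).
\end{equation*}
This single identity is the engine behind both parts of the proposition.

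For part (a), I would define $V$ on the subspace $\operatorname{span}\{\Phi(w)^*\xi : w \in X,\ \xi \in \cE\}$ of $\cF$ by $V\Phi(w)^*\xi = \widetilde\Phi(w)^*\xi$. The equality of the two factorizations gives
\begin{equation*}
  \langle \widetilde\Phi(w)^*\xi, \widetilde\Phi(z)^*\eta\rangle = \langle (k^{\cM}/s)(z,w)\xi,\eta\rangle = \langle \Phi(w)^*\xi, \Phi(z)^*\eta\rangle,
\end{equation*}
so $V$ is well defined and isometric on its domain; extending it by $0$ on the orthogonal complement produces a partial isometry $V : \cF \to \widetilde\cF$ with $V\Phi(w)^* = \widetilde\Phi(w)^*$, that is, $\widetilde\Phi(w) = \Phi(w)V^*$. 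The remaining relation $\Phi(w) = \widetilde\Phi(w)V$ then follows since $V^*V$ is the projection onto $\overline{\operatorname{span}}\{\Phi(w)^*\xi\}$ while $\Phi(w)$ annihilates the orthogonal complement of this span, whence $\Phi(w)V^*V = \Phi(w)$.

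For part (b), the natural minimal representative is the one coming from the canonical factorization of $k^{\cM}/s$ through its own reproducing kernel Hilbert space, for which $\cF = \overline{\operatorname{span}}\{\Phi(w)^*\xi\}$, equivalently $\bigcap_{w \in X} \ker \Phi(w) = \{0\}$. Given such a minimal $\Phi$ and \emph{any} partial isometry $V$ as in (a), taking adjoints in the two identities of (a) gives $V^*V\,\Phi(z)^* = \Phi(z)^*$ for all $z$, so $V^*V$ fixes $\overline{\operatorname{span}}\{\Phi(w)^*\xi\} = \cF$ and $V$ is an isometry. If $\widetilde\Phi$ is minimal as well, the symmetric computation shows $VV^*$ fixes $\widetilde\cF$, so $V$ is unitary and $\Phi = \widetilde\Phi V$ implements the asserted unitary equivalence.

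I do not expect a hard estimate anywhere; the main point to get right is the bookkeeping between the two factorizations and their adjoints. In particular, one must correctly identify minimality with $\bigcap_{w}\ker\Phi(w) = \{0\}$ and keep track of which of the relations $\Phi = \widetilde\Phi V$ and $\widetilde\Phi = \Phi V^*$ controls $V^*V$ versus $VV^*$, so that minimality of the appropriate factor upgrades $V$ from a partial isometry to an isometry and then, when both factors are minimal, to a unitary.
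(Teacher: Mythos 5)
Your proof is correct and follows essentially the same route as the paper's: both deduce from Lemma \ref{lem:mult_char}(b) that $\Phi(z)\Phi(w)^* = \widetilde\Phi(z)\widetilde\Phi(w)^* = (k^{\cM}/s)(z,w)$, define $V$ on $\bigvee_{z,\xi}\Phi(z)^*\xi$ by $\Phi(z)^*\xi \mapsto \widetilde\Phi(z)^*\xi$ and extend by zero, and for minimality take $\cF$ to be this closed span so that $V^*V = I$. Your writeup merely fills in details the paper leaves implicit (why $\Phi = \widetilde\Phi V$ follows, and why two minimal representations are unitarily equivalent), which is fine.
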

\begin{proof}
  (a)
  If $\Phi \in \Mult(\cH_s \otimes \cF, \cH_k \otimes \cE)$ is a partially isometric multiplier
  with $\cM = \Phi \cdot (\cH_s \otimes \cE)$, then it is a co-isometric multiplier from $\cH_s \otimes \cF$
  to $\cM$, hence Equation \eqref{eqn:quotient_factored} holds for $\Phi$ by part (b)
  of Lemma \ref{lem:mult_char}. In particular, if $\Phi \in \Mult(\cH_s \otimes \cF, \cH_k \otimes \cE)$ and
  $\widetilde \Phi \in \Mult(\cH_s \otimes \widetilde \cF, \cH_k \otimes \cE)$ are two such multipliers, then
  \begin{equation*}
    \Phi(z) \Phi(w)^* = \widetilde \Phi(z) \widetilde \Phi(w)^* \quad (z,w \in X),
  \end{equation*}
  hence there exists a unitary operator
  \begin{equation*}
    V: \bigvee_{\substack{z \in X \\ \xi \in \cE}} \Phi(z)^* \xi
    \to \bigvee_{\substack{z \in X \\ \xi \in \cE}} \widetilde \Phi(z)^* \xi, \quad \Phi(z)^* \xi \mapsto \widetilde \Phi(z)^* \xi,
  \end{equation*}
  where $\bigvee$ denotes the closed linear span.
  Extending $V$ by zero on the orthogonal complements, we obtain a partial isometry which we still denote by $V$ and which satisfies $\Phi(z) = \widetilde \Phi(z) V$
  and $\widetilde \Phi(z) = \Phi(z) V^*$.

  (b) To find a minimal partially isometric multiplier $\Phi$, we note that in the
  proof of Theorem \ref{thm:main} we may clearly choose \[\cF = \bigvee_{\substack{z \in X \\ \xi \in \cE}}  \Phi(z)^* \xi.\]
  If $\widetilde \Phi$ is another multiplier and $V: \cF \to \widetilde \cF$
  is a partial isometry as in part (a), then $\Phi(z) = \Phi(z) V^* V$ for all $z \in X$ and hence $V^* V = I$ by choice of $\cF$.
\end{proof}

\subsection{Pairs of nested invariant subspaces}
Our first factorization result for pairs of nested invariant subspaces is a more general version of Theorem \ref{thm:factorization}.
Let $k$ and $\ell$ be two kernels on a set $X$ and let $\cE$ be a Hilbert space.  If $\cM \subset \cH_k \otimes \cE$ and
$\cN \subset \cH_\ell \otimes \cE$ are two closed subspaces,
we say that $\cN$ is \emph{contractively contained} in $\cM$ if $\cN \subset \cM$ as sets, and
the inclusion is a contraction.

\begin{thm}
  \label{thm:factorization_three_space}
  Let $X$ be a set, let $k,\ell$ be kernels on $X$ and let $s$ be a normalized complete Nevanlinna--Pick kernel on $X$.
  Let $\cE$ be a Hilbert space and let $\cM \subset \cH_k \otimes \cE$ and
  $\cN \subset \cH_\ell \otimes \cE$ be two non-zero closed subspaces such that $\cN$
  is contractively contained in $\cM$.
  If $\cF,\cG$ are Hilbert spaces and
  $\Phi \in \Mult(\cH_s \otimes \cF,
  \cH_k \otimes \cE)$ and $\Psi \in \Mult(\cH_s \otimes \cG, \cH_\ell \otimes \cE)$ are partially
  isometric multipliers with $\cM = \Phi \cdot (\cH_s \otimes \cF)$ and $\cN = \Psi \cdot (\cH_s \otimes \cG)$,
  then there exists a contractive multiplier $\Gamma \in \Mult(\cH_s \otimes \cG, \cH_s \otimes \cF)$
  with $\Psi = \Phi \Gamma$.
\end{thm}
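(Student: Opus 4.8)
The plan is to reduce the claimed factorization $\Psi = \Phi \Gamma$ to a Leech-type interpolation problem governed by the kernel $s$, and then to solve that problem using the complete Nevanlinna--Pick property a second time.

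First I would record what the two partially isometric multipliers say about the reproducing kernels of $\cM$ and $\cN$. Exactly as in the proof of Theorem \ref{thm:main}, a partially isometric multiplier whose range is a given subspace is a co-isometric multiplier \emph{onto that subspace}, so part (b) of Lemma \ref{lem:mult_char} applies with the intrinsic kernel of the subspace as the target. Writing $k^{\cM}$ and $k^{\cN}$ for the $\cB(\cE)$-valued reproducing kernels of $\cM$ and $\cN$, and applying Lemma \ref{lem:mult_char}(b) to $\Phi$ (with kernels $s I_{\cF}$ and $k^{\cM}$) and to $\Psi$ (with kernels $s I_{\cG}$ and $k^{\cN}$), I obtain
\[
  k^{\cM}(z,w) = s(z,w)\, \Phi(z)\Phi(w)^*, \qquad k^{\cN}(z,w) = s(z,w)\, \Psi(z)\Psi(w)^* \quad (z,w \in X).
\]
Note that the ambient kernels $k$ and $\ell$ play no further role once these identities are in hand; everything that follows concerns $\cM$ and $\cN$ only as reproducing kernel Hilbert spaces of $\cE$-valued functions over $s$.

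Next I would convert the contractive containment of $\cN$ in $\cM$ into a domination of kernels. By the standard comparison theorem for reproducing kernels (see \cite{PR16}), a space $\cN$ is contractively contained in $\cM$ if and only if $k^{\cM} - k^{\cN} \ge 0$ as $\cB(\cE)$-valued kernels. Combining this with the two identities above yields the single positivity statement
\[
  s(z,w)\big( \Phi(z)\Phi(w)^* - \Psi(z)\Psi(w)^* \big) = (k^{\cM} - k^{\cN})(z,w) \ge 0 \quad (z,w \in X).
\]

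Finally, this positivity is precisely the hypothesis of the Leech theorem for the complete Nevanlinna--Pick kernel $s$: it produces a contractive multiplier $\Gamma \in \Mult(\cH_s \otimes \cG, \cH_s \otimes \cF)$ with $\Phi(z)\Gamma(z) = \Psi(z)$ for all $z \in X$, that is, $\Psi = \Phi \Gamma$, which is exactly the desired factorization. I expect this last step to be the crux, and the only place where the complete Nevanlinna--Pick structure of $s$ is used (beyond the kernel identities). Concretely it can be carried out by a lurking-isometry argument: factor the positive kernel as $s(z,w)\big(\Phi(z)\Phi(w)^* - \Psi(z)\Psi(w)^*\big) = H(z)H(w)^*$ for a suitable operator-valued function $H$, and use the representation $1/s(z,w) = 1 - b(z)b(w)^*$ of the normalized complete Nevanlinna--Pick kernel $s$ (as in Lemma \ref{lem:invariant_subspace}) to rewrite the factorization as the equality of kernels
\[
  \Phi(z)\Phi(w)^* + b(z) H(z)\, \big(b(w) H(w)\big)^* = \Psi(z)\Psi(w)^* + H(z)H(w)^*.
\]
The two sides are Gram kernels of the rows $\big(\Phi(z)\ \ b(z) H(z)\big)$ and $\big(\Psi(z)\ \ H(z)\big)$, so there is an isometry $V$ identifying the associated column spaces; decoding the block form of $V$ into a transfer function gives $\Gamma$, and contractivity of $V$ forces $s(z,w)\big(I_{\cF} - \Gamma(z)\Gamma(w)^*\big) \ge 0$, whence $\Gamma$ is a contractive multiplier by Lemma \ref{lem:mult_char}(a). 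The bookkeeping in this colligation argument, rather than any conceptual difficulty, is the main obstacle.
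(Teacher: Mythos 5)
Your proposal is correct and follows essentially the same route as the paper's proof: the kernel identities $k^{\cM} = s\,\Phi\Phi^*$ and $\ell^{\cN} = s\,\Psi\Psi^*$ from Lemma \ref{lem:mult_char}(b), the kernel domination $\ell^{\cN} \le k^{\cM}$ from contractive containment, and then Leech's theorem for the complete Nevanlinna--Pick kernel $s$ to produce the contractive multiplier $\Gamma$. The only difference is presentational: where the paper cites \cite[Theorem 8.57]{AM02} (noting that its irreducibility hypothesis can be relaxed to normalization), you sketch the standard lurking-isometry proof of that same Leech theorem.
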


\begin{proof}
  Let $k^{\cM}$ and $\ell^{\cN}$ denote the reproducing kernels of $\cM$ and $\cN$ respectively. Then part (b) of Lemma \ref{lem:mult_char}
  implies that
  \begin{equation*}
    s(z,w) \Phi(z) \Phi(w)^* = k^{\cM}(z,w)  \quad \text{ and } s(z,w) \Psi(z) \Psi(w)^* = \ell^{\cN}(z,w).
  \end{equation*}
  Since $\cN$ is contractively contained in $\cM$, we find that $\ell^{\cN} \le k^{\cM}$
  (this is the special case of part (a) of Lemma \ref{lem:mult_char} when the multiplier
  is the constant function $I_{\cE}$).
  Thus
  \begin{equation*}
    s(z,w) ( \Phi(z) \Phi(w)^* -  \Psi(z) \Psi(w)^*) \ge 0.
  \end{equation*}
  An application of Leech's theorem for complete Nevanlinna--Pick kernels (the implication (i) $\Rightarrow$ (ii)
  of \cite[Theorem 8.57]{AM02}) now yields a contractive multiplier $\Gamma$ as desired. (Strictly
    speaking, it is assumed in the statement of \cite[Theorem 8.57]{AM02} that $s$ satisfies an irreducibility
    assumption that implies that $\cH_s$ separates the points of $X$; however, the proof shows
  that it suffices to assume that $s$ is normalized.)
\end{proof}

We can now prove our generalization of Arias's solution of the McCullough and Trent conjecture.

\begin{proof}[Proof of Theorem \ref{thm:factorization2}]
  By Theorem \ref{thm:main}, there exist a Hilbert space $\cG_0$ and a partially
  isometric multiplier $\Psi \in \Mult(\cH_s \otimes \cG_0, \cH_k \otimes \cE)$
  so that $\cN = \Psi \cdot (\cH_s \otimes \cG_0)$. By Theorem \ref{thm:factorization},
  there exists a contractive multiplier $\Gamma_0 \in \Mult(\cH_s \otimes \cG_0, \cH_s \otimes \cF)$
  such that $\Psi = \Phi \Gamma_0$.
  Let 
  \[
    \cL = \ol{\Gamma_0 \cdot (\cH_s\otimes \cG_0)},
\]
    which is a closed $\Mult(\cH_s)$-invariant subspace
  of $\cH_s \otimes \cF$. Thus, we may apply Theorem \ref{thm:main} again
  to find a Hilbert space $\cG$ and a partially isometric multiplier $\Gamma \in \Mult(\cH_s \otimes \cG, \cH_s \otimes \cF)$ such that $\cL = \Gamma \cdot (\cH_s \otimes \cG)$.
  Then $\Phi \Gamma \in \Mult(\cH_s \otimes \cG, \cH_k \otimes \cE)$ is
    a contractive multiplier whose range is contained in $\cN$, so that
  \begin{equation*}
    P_{\cN} \ge M_{\Phi} M_{\Gamma} M_{\Gamma}^* M_{\Phi}^* = M_{\Phi} P_{\cL} M_{\Phi}^* 
    \ge M_{\Phi} M_{\Gamma_0} M_{\Gamma_0}^* M_{\Phi}^* = M_{\Psi} M_{\Psi}^* = P_{\cN}.
  \end{equation*}
  Consequently, $M_{\Phi \Gamma} M_{\Phi \Gamma}^* = P_{\cN}$.
\end{proof}

\subsection{Spaces of holomorphic functions in $\bC^d$}

Finally, we provide the proof that invariance of a subspace under multiplication by the coordinate
functions suffices in many concrete cases to obtain a representation as in Theorem \ref{thm:main}.  The argument is standard, see for instance \cite[Lemma 4.1]{GRS2002}. A domain $\Omega \subset \bC^d$ is called \emph{circular} if $\lambda \Omega \subset \Omega$
for all $\lambda$ in the unit circle $\bT$.

\begin{prop}
  \label{prop:coord}
  Let $\Omega\subset \bC^d$ be a circular domain containing the origin.
  Let $k,s$ be kernels on $\Omega$ such that $s$ is non-vanishing and $k/s \ge 0$. Suppose that
  \begin{itemize}
    \item $\cH_s$ consists of analytic functions on $\Omega$,
    \item the coordinate functions $z_1,\ldots,z_d$ are multipliers of $\cH_s$, and
    \item $s(\lambda z, \lambda w) = s(z,w)$ for all $z,w \in \Omega, \lambda \in \bT$.
  \end{itemize}
  Then, a closed subspace $\cM \subset \cH_k \otimes \cE$ is $\Mult(\cH_s)$-invariant
  if and only if it is invariant under multiplication by $z_1,\ldots,z_d$.
\end{prop}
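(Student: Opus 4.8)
The forward implication is immediate: if $\cM$ is $\Mult(\cH_s)$-invariant, then since the coordinate functions $z_1,\dots,z_d$ are multipliers of $\cH_s$ by hypothesis, $\cM$ is in particular invariant under multiplication by each $z_j$. For the converse, the plan is to approximate an arbitrary $\varphi \in \Mult(\cH_s)$ by polynomials in a way that controls the multiplier norm, and the engine for this is the rotation action. The invariance $s(\lambda z, \lambda w) = s(z,w)$ for $\lambda \in \bT$ lets me define $U_\lambda$ on $\cH_s$ by $(U_\lambda f)(z) = f(\lambda z)$; computing $U_\lambda^*$ on the kernel functions and invoking the invariance of $s$ shows that each $U_\lambda$ is unitary, so that $\lambda \mapsto U_\lambda$ is a unitary representation of $\bT$. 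Conjugation gives $U_\lambda M_\varphi U_\lambda^* = M_{\beta_\lambda \varphi}$, where $(\beta_\lambda \varphi)(z) = \varphi(\lambda z)$, so $\beta_\lambda$ acts isometrically on $\Mult(\cH_s)$.

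Next I would decompose multipliers into homogeneous pieces. Since $s$ is non-vanishing, every $\varphi \in \Mult(\cH_s)$ is analytic on $\Omega$ (writing $\varphi = (\varphi\, k_s(\cdot,w))/k_s(\cdot,w)$ on $\Omega$), so its Taylor expansion at the origin involves only nonnegative degrees. Defining the degree-$n$ part through the weak-operator integral $M_{\varphi_n} = \frac{1}{2\pi} \int_0^{2\pi} e^{-in\theta} M_{\beta_{e^{i\theta}} \varphi}\, d\theta$ produces a homogeneous polynomial $\varphi_n$ that is a multiplier of $\cH_s$ with $\|M_{\varphi_n}\| \le \|M_\varphi\|$. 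The associated Fej\'er (Ces\`aro) means $\sigma_N \varphi = \sum_{n=0}^N \bigl(1 - \tfrac{n}{N+1}\bigr) \varphi_n$ are then polynomials satisfying $\|\sigma_N \varphi\|_{\Mult(\cH_s)} \le \|\varphi\|_{\Mult(\cH_s)}$, by positivity and $L^1$-normalization of the Fej\'er kernel, and $\sigma_N \varphi \to \varphi$ pointwise on $\Omega$.

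Finally I would transfer the approximation to $\cH_k$. The contractive inclusion $\Mult(\cH_s) \subset \Mult(\cH_k)$ noted before Theorem \ref{thm:main} gives $\sup_N \|M_{\sigma_N \varphi}\|_{\cB(\cH_k)} \le \|\varphi\|_{\Mult(\cH_s)}$, so the operators $M_{\sigma_N \varphi} \otimes I_\cE$ on $\cH_k \otimes \cE$ are uniformly bounded. Testing their adjoints against the reproducing kernels $k(\cdot,w) \otimes e$ and using $\sigma_N \varphi \to \varphi$ pointwise shows that $(M_{\sigma_N \varphi} \otimes I_\cE)^* \to (M_\varphi \otimes I_\cE)^*$ on a dense set, whence $M_{\sigma_N \varphi} \otimes I_\cE \to M_\varphi \otimes I_\cE$ in the weak operator topology. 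Since each $\sigma_N \varphi$ is a polynomial, $M_{\sigma_N \varphi} \otimes I_\cE$ preserves $\cM$ by the assumed coordinate invariance; as a closed subspace is invariant under weak-operator limits of operators preserving it (because $T \mapsto P_{\cM^\perp} T P_{\cM}$ is weak-operator continuous), I conclude $(M_\varphi \otimes I_\cE) \cM \subset \cM$, i.e.\ $\cM$ is $\Mult(\cH_s)$-invariant.

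I expect the main obstacle to be the middle step: producing polynomial approximants whose \emph{multiplier} norms are controlled, rather than merely their pointwise values or Hilbert-space norms. This control is exactly what the Fej\'er summability via the unitary $\bT$-representation supplies, and it is essential, since the uniform multiplier bound is what survives the passage through the contractive inclusion into $\Mult(\cH_k)$ and yields the uniform operator bound needed for the weak-operator-topology limit. Note also that $k$ itself is never assumed rotation-invariant; the symmetry is used only on $\cH_s$, and the positivity condition $k/s \ge 0$ enters solely through the contractive inclusion of multiplier algebras.
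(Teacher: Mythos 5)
Your proposal is correct and follows essentially the same route as the paper: rotation invariance of $s$ gives a unitary representation of $\bT$ on $\cH_s$, Fej\'er means then produce polynomial multipliers with $\|\sigma_N\varphi\|_{\Mult(\cH_s)} \le \|\varphi\|_{\Mult(\cH_s)}$, and the contractive inclusion $\Mult(\cH_s) \subset \Mult(\cH_k)$ plus pointwise convergence yields weak-operator convergence on $\cH_k \otimes \cE$, so invariance passes to the limit. The only differences are cosmetic: the paper cites references for the SOT-continuity of the representation and the SOT convergence of the Fej\'er means, whereas you build the homogeneous parts by hand via weak-operator integrals (and explicitly justify, via non-vanishing of $s$, that multipliers are analytic so the Fej\'er means are genuinely polynomials) --- a detail the paper leaves implicit.
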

\begin{proof}
  To prove the non-trivial implication, suppose that $\cM$ is invariant
  under $z_1,\ldots,z_d$ and let $\varphi \in \Mult(\cH_s)$.
  If $f: \Omega \to \bC$ and $\lambda \in \bT$, write $f_\lambda(z) = f(\lambda z)$.
  The assumptions on $s$ imply that there exists an SOT-continuous unitary representation
  \begin{equation*}
    \Gamma: \bT \to \cB(\cH_s) \quad \text{such that} \quad \Gamma_\lambda f = f_\lambda
  \end{equation*}
  (see for instance \cite[Section 6]{hartz2017isom}).
  Since $M_{\varphi_\lambda} = \Gamma_\lambda M_\varphi \Gamma_{\lambda}^*$, we find
  that 
  \[
   \|\varphi_\lambda\|_{\Mult(\cH_s)} = \|\varphi\|_{\Mult(\cH_s)}
   \]
  and that the map $\lambda \mapsto \varphi_\lambda$ is SOT-continuous.
  In this setting, a routine application of the Fej\'er kernel (cf. \cite[Lemma I 2.5]{Katznelson04} or \cite[Lemma 4.1]{GRS2002})
  shows that the Fej\'er means $(p_n)$ of $\varphi$ converge to $\varphi$ in the strong operator
  topology of $\cB(\cH_s)$. Since $\Mult(\cH_s) \subset \Mult(\cH_k)$ contractively,
  the sequence $(p_n)$ is also bounded in $\Mult(\cH_k)$,
  and since $(p_n)$ also converges to $\varphi$ pointwise, it converges to $\varphi$ at least
  in the weak operator topology of $\cB(\cH_k)$. Therefore, invariance of $\cM$ under each polynomial $p_n$
  implies invariance under $\varphi$, as asserted.
\end{proof}

\section{Discussion of further extensions}
\label{sec:further}

\subsection{Extending Theorem \ref{thm:factorization2} to triples of kernels}

Since the first factorization result for pairs of nested subspaces, Theorem \ref{thm:factorization}, readily
generalizes to triples of kernels $k,\ell,s$ (see Theorem \ref{thm:factorization_three_space}) one might ask if the second factorization result,
Theorem \ref{thm:factorization2},  can also be generalized to triples of kernels. More precisely, one could ask:

\begin{quest}
  \label{quest:factorization2_three_space}
  Let $X$ be a set, let $k,\ell$ be kernels on $X$ and let $s$ be a normalized complete Nevanlinna--Pick kernel on $X$
  such that $\ell/s \ge 0$.
  Let $\cE$ be a Hilbert space and let $\cM \subset \cH_k \otimes \cE$ and
  $\cN \subset \cH_\ell \otimes \cE$ be two non-zero closed subspaces such that $\cN$
  is contractively contained in $\cM$.

  If
  $\Phi \in \Mult(\cH_s \otimes \cF,
  \cH_k \otimes \cE)$ is a partially isometric
  multiplier with $\cM = \Phi \cdot (\cH_s \otimes \cF)$ and if $\cN$ is $\Mult(\cH_s)$-invariant,
  does there exist a Hilbert space $\cG$
  and a partially isometric multiplier $\Gamma \in \Mult(\cH_s \otimes \cG, \cH_s \otimes \cF)$
  such that $\Phi \Gamma$ is a partially isometric multiplier from $\cH_s \otimes \cG$
  into $\cH_\ell \otimes \cE$ with $\cN = (\Phi \Gamma) \cdot (\cH_s \otimes \cG)$?
\end{quest}

The following somewhat trivial example shows that this is not possible in general.

\begin{exa}
  Let $X = \bD$ be the open unit disc in $\bC$ and let $s$ be the constant function $1$, which we view as a kernel on $X$. Then, $\cH_s$ simply consists of all constant functions.
  Let $\cH_\ell = H^2$ (the Hardy space on $\bD$), and let $\cH_k = A^2$ (the Bergman space on $\bD$).
  Let further $\cE = \bC$, $\cM = A^2$ and $\cN = H^2$, so that $\cN$ is indeed contractively
  contained in $\cM$ as $\ell \le k$.

  Let $\cF = A^2$ and define $\Phi:\bD\to \cB(\cF,\bC)$ by $\Phi(z) = k_z^* \in \cB(\cF,\bC)$.
  (Note that in this example, $A^2$ plays both the role of a reproducing kernel Hilbert space on $\bD$ and 
  that of a coefficient space.) Since
  \begin{equation*}
    k(z,w) = \Phi(z) s(z,w) \Phi(w)^* \quad (z, w \in \bD),
  \end{equation*}
  Lemma \ref{lem:mult_char} (b) shows that $\Phi$ is a co-isometric multiplier from $\cH_s \otimes \cF$
  onto $\cH_k$. In fact, since $\cH_s$ consists of all constant functions, $\cH_s \otimes \cF$
  is canonically identified with $\cF = A^2 = \cH_k$, and $M_{\Phi}$ is simply the identity operator
  modulo this identification, so in particular $M_{\Phi}$ is a unitary from $\cH_s \otimes \cF$ onto $\cH_k$.

  We claim that there do not exist a Hilbert space $\cG$ and a partially isometric multiplier $\Gamma$ from 
  $\cH_s \otimes \cG$ into $\cH_s \otimes \cF$ such that $\Phi \Gamma$ is a multiplier
  from $\cH_s \otimes \cG$ into $\cH_\ell$ with $\cN = (\Phi \Gamma) \cdot (\cH_s \otimes \cG)$.
  Indeed, if $\Gamma$ is a partially isometric multiplier from $\cH_s \otimes \cG$ into $\cH_s \otimes \cF$,
  then since $M_{\Phi}: \cH_s \otimes \cF \to \cH_k$
  is unitary, the range of $M_{\Phi} M_{\Gamma}$ is
  a closed subspace of $\cH_k = A^2$, and in particular not equal to $\cN = H^2$.
\end{exa}

\subsection{Boundary values}

Let $\cE$ and $\cF$ be separable Hilbert spaces.
It is an important part of the classical Beurling--Lax--Halmos theorem that every isometric multiplier $\Phi \in \Mult(H^2 \otimes \cF, H^2 \otimes \cE)$
is inner in the sense that
its non-tangential boundary values $\Phi(z)$ are isometries for almost every $z \in \bT$.
Greene, Richter and Sundberg \cite{GRS2002} strengthened the analogy between the theorem of McCullough and Trent \cite{MT00}
and the classical Beurling--Lax--Halmos theorem.
They showed that for a large class of complete Nevanlinna--Pick spaces $\cH$
on the unit ball $\bB_d$ in $\bC^d$, every partially isometric multiplier $\Phi \in \Mult(\cH \otimes \cF, \cH \otimes \cE)$
has non-tangential boundary values that are partial isometries of constant rank
\begin{equation}
  \label{eqn:rank}
  r = \sup \{ \operatorname{rank} \Phi(z): z \in \bB_d \}
\end{equation}
almost everywhere on $\partial \bB_d$.

One may ask if there is a result of this type in our context. Assume henceforth the setting of Theorem \ref{thm:main}. Suppose that $X = \bB_d$ and also assume for simplicity that
$k(z,z) \neq 0$ for all $z \in X$. A multiplier $\Phi \in \Mult(\cH_s \otimes \cF, \cH_k \otimes \cE)$ is not necessarily bounded; instead,
if it is a contractive mutliplier, then by virtue of Lemma \ref{lem:mult_char} it obeys the estimate
\begin{equation*}
  \|\Phi(z) \|^2_{\cB(\cF,\cE)} \le \frac{k(z,z)}{s(z,z)} \quad (z \in X).
\end{equation*}
Therefore, if $\Phi$ is a contractive multiplier, then the range of the function
\begin{equation*}
  G_{\Phi}: \bB_d \to \cB(\cE), \quad z \mapsto \frac{s(z,z)}{k(z,z)} \Phi(z) \Phi(z)^*
\end{equation*}
consists of positive contractions on $\cE$. The result of Greene, Richter, Sundberg \cite{GRS2002}
then shows that if $k=s$ belongs to their class of complete Nevanlinna--Pick kernels on $\bB_d$, then
for every partially isometric multiplier $\Phi$, the function $G_{\Phi}$ has non-tangential boundary
values that are orthogonal projections of constant rank $r$ as in \eqref{eqn:rank}.

We now indicate why such a result fails when $\cH_s = H^2$, $\cH_k = A^2$,
and $\cE = \bC$.
Let $\cM$ be a non-zero multiplier invariant subspace of $A^2$ and let $\Phi \in \Mult(H^2 \otimes \cF, A^2)$ be a
partially isometric multiplier with $\cM = \Phi \cdot (H^2 \otimes \cF)$.
In this setting, the direct analogue of the theorem of Greene, Richter and Sundberg would be the statement
that the scalar-valued function $G_{\Phi}$ has non-tangential limit $1$ almost everywhere on $\bT$.

Part (b) of Lemma \ref{lem:mult_char}
implies that
\begin{equation*}
  \Phi(z) \Phi(w)^* = \frac{k^{\cM}(z,w)}{s(z,w)} \quad (z,w \in \bD),
\end{equation*}
and hence
\begin{equation*}
  G_{\Phi}(z) = \frac{k^{\cM}(z,z)}{k(z,z)} \quad(z \in \bD).
\end{equation*}
This function is the square of the majorization function of \cite{ARS02} and is called the root function in \cite{YZ03}.
In some very simple cases of invariant subspaces of $A^2$, such as the invariant subspaces generated by $(z-a)^N$ for $a \in \bD$
and $N \in \bN$, the function $G_{\Phi}$ can be computed explicitly and indeed has boundary values $1$ almost everywhere
on $\bT$, see \cite[Proposition 12]{YZ03}.

For general multiplier invariant subspaces $\cM$, however, the question of boundary values of $G_\Phi$ is a delicate one, as can
be seen from \cite{ARS02}. In particular, it follows from Theorem A of \cite{ARS02} that if $\cM$ is contained
in an invariant subspace of index larger than one, then $G_{\Phi}$ has a non-tangential limit inferior of $0$
almost everywhere on $\bT$, see \cite{ARS02} for definitions and further discussions. Furthermore,
Proposition 7.1 of \cite{ARS02} (and the discussion preceding it) shows that if $\Lambda$ is an $A^2$-interpolating sequence with non-tangential cluster
set $E \subset \bT$, and if
\begin{equation*}
  \cM = \{ f \in A^2: f(z) = 0 \text{ for all } z \in \Lambda \},
\end{equation*}
then $G_{\Phi}$ has non-tangential limit $1$ for almost every $z \in \bT \setminus E$, but
has non-tangential limit inferior $0$ for all $z \in E$. Moreover, the proof of Corollary 7.4 in \cite{ARS02}
shows that for any compact subset $E \subset \bT$, there is an $A^2$-interpolating sequence whose non-tangential cluster set is $E$.
Thus, the boundary behavior of $G_\Phi$ can be very complicated.
 
\bibliographystyle{amsplain}
\bibliography{literature}

\end{document}